\newtheorem{theorem}{Theorem}
\newtheorem{lemma}[theorem]{Lemma}
\newtheorem{proposition}[theorem]{Proposition}
\newtheorem{remark}[theorem]{Remark}
\newcommand{\E}{\mathbb E}
\newcommand{\F}{\mathcal F}
\newcommand{\R}{\mathbb R}
\renewcommand{\P}{\mathbb P}
\newcommand{\kom}[1]{}
\renewcommand{\kom}[1]{{\bf [#1]}}
\newcounter{komcounter}
\numberwithin{komcounter}{section}
\title{
Stopping problems with an unknown state}
\author{Erik Ekström\\\textit{Department of Mathematics, Uppsala University}\\ \\
Yuqiong Wang\\\textit{Department of Mathematics, Uppsala University}\\
}
\begin{document}

\maketitle

\begin{abstract}
We extend the classical setting of an optimal stopping problem under full information to include for problems
with an unknown state. The framework allows the unknown state to influence (i) the drift of the underlying process, (ii) the payoff functions, and (iii) the distribution of the time horizon. 
Since the stopper is assumed to observe the underlying process and the random horizon, 
this is a two-source learning problem. Assigning a prior distribution for the 
unknown state, filtering theory can be used to embed the problem in a Markovian framework, and we thus reduce the problem
with incomplete information to a problem with complete information but with one more state-variable.
We provide a convenient formulation of the reduced problem, based on a measure change technique that decouples 
the underlying process from the state variable representing the posterior of the unknown state. 
Moreover, we show by means of several new 
examples that this reduced formulation can be used to solve problems explicitly.
\end{abstract}

\section{Introduction}\label{intro}

In most literature on optimal stopping theory, the stopper acts under full information about the 
underlying system. In some applications, however, information is limited, and the stopper 
then needs to base her decision only on the information available upon stopping.
We study stopping problems of the type
\begin{equation}
\label{osp1}
\sup_{\tau}\E\left[g(\tau,X_\tau,\theta)1_{\{\tau< \gamma\}}+ h(\gamma,X_\gamma,\theta)1_{\{ \gamma\leq\tau\}}\right],
\end{equation}
where $X$ is a diffusion process; here
$g$ and $h$ are given functions representing the payoff if stopping occurs before and after the random time horizon $\gamma$, respectively, and $\theta$ is a Bernoulli random variable representing
the unknown state. This unknown state may influence the drift of the diffusion process $X$, the distribution of the random horizon $\gamma$ and the payoff functions $g$ and $h$. 

Cases with $g(t,x,\theta)=g(t,\theta)$ and $h(t,x,\theta)=h(t,\theta)$ are closely related to statistical problems, where the process $X$ merely serves as an observation process but does not affect the payoff upon stopping. A classical example is the sequential testing problem for a Wiener process, see \cite{S} for a perpetual version and \cite{NP} for a version with a random 
horizon. 
Cases with $g(t,x,\theta)=g(t,x)$ and $h(t,x,\theta)=h(t,x)$, on the other hand, where the unknown state does not affect the payoff directly but only implicitly via the dynamics of $X$, 
have been studied mainly in the financial literature. For example, American options with
incomplete information about the drift of the underlying process have been studied in 
\cite{DMV} and \cite{EVa}, and a liquidation problem has been studied in \cite{EL}.
Related literature include studies of models containing change-points, see \cite{G} and \cite{HKMR}, 
a study allowing for an arbitrary distribution of the unknown state (\cite{EV}), 
problems of stochastic control (\cite{L}) and singular control (\cite{da}), and stochastic games (\cite{DGV}) under incomplete information.
Stopping problems with a random time horizon are studied in, for example, \cite{CG} and \cite{LM}, where the authors consider models with a random finite time horizon independent of the underlying process. 

In the current article, we study the optimal stopping problem using the general formulation in \eqref{osp1}, which  
is flexible enough to accommodate several new examples. In particular, the notion of a state-dependent
random horizon appears to be largely unstudied, even though it is a  natural ingredient 
in many applications. Indeed, consider a situation
where the unknown state is either "good" ($\theta=1$) or "bad" ($\theta=0$) for an agent who is thinking of investing
in a certain business opportunity. Since agents are typically subject to competition, the business opportunity would 
eventually disappear, and the rate with which it does so would typically be larger in the "good" state than in the "bad" state. The disappearance of a business opportunity is incorporated in our set-up by choosing the compensation $h\equiv 0$.

In some applications, it is more natural to have a 
random state-dependent horizon at which the stopper is forced to stop (as opposed to missing out on the opportunity). For example, 
in modeling of financial contracts with recall risk (see, e.g., \cite{GH}), the party who makes the recall would decide on a time point at which the positions at hand have to be terminated. 
Consequently, problems with $h=g$ can be viewed as problems of forced stopping. 
More generally, the random horizon can be useful in models with competition, 
where $h\leq g$ corresponds to situations with first-mover advantage, and $h\geq g$ to situations with second-mover advantage.

We first apply filtering methods to the stopping problem \eqref{osp1}, which 
allows us to re-formulate the stopping problem in terms of a two-dimensional state process $(X,\Pi)$, where 
$\Pi$ is the probability of one of the states conditional on observations. Then 
a measure change technique is employed, where the dynamics of the diffusion process $X$ under the new measure
are unaffected by the 
unknown state, whereas the Radon-Nikodym derivative can be fully expressed in terms of $\Pi$.
Finally, it is shown how the general set-up, with two spatial dimensions, can be reduced further 
in specific examples. In fact, we provide three different examples (a hiring problem, a problem of optimally closing a short position, and a sequential testing problem with random horizon) where it turns out that
the spatial dimension is one-dimensional so that the problems can be fully analysed. The examples are mainly
of motivational character, and in order not to burden the presentation with too many details, we content ourselves with 
providing the reduction to one spatial dimension -- a detailed study of the corresponding one-dimensional problem
can then be performed using standard methods of optimal stopping theory.

\section{Problem specification}

We consider a Bayesian set-up where one observes a diffusion process $X$ in continuous time, 
the drift of which depends on an unknown state $\theta$ that takes values 0 and 1 with probabilities
$1-\pi$ and $\pi$, respectively. Given payoff functions $g$ and $h$, the problem is to 
stop the process so as to maximize the expected reward in \eqref{osp1}. Here the random horizon
$\gamma$ has a state-dependent distribution, but is independent of the noise of $X$.

The above set-up can be realised by considering a
probability space $(\Omega, \F,\P_\pi)$ hosting a standard Brownian motion $W$ and 
an independent Bernoulli-distributed random variable $\theta$ with $\P_\pi(\theta=1)=\pi=1-\P_\pi(\theta=0)$.
Additionally, we let $\gamma$ be a random time (possible infinite) independent of $W$ and with state-dependent survival distribution
\[\P_\pi(\gamma>t\vert \theta=i) = F_i(t),\]
where $F_i$ is continuously differentiable and non-increasing with $F_i(0)=1$ and $F_i(t)>0$ for all $t\geq 0$, $i=0,1$. We remark that we include the possibility that $F_i\equiv 1$ for some $i\in\{0,1\}$
(or for both), corresponding to an infinite horizon.
We then have 
\begin{equation}
\label{Ppi}
\mathbb P_\pi=(1-\pi)\mathbb P_0 + \pi\mathbb P_1,
\end{equation} 
where $\mathbb P_i$ is a probability measure under which $\theta=i$, $i=0,1$. Now let
\begin{equation}
\label{X}
dX_t= \mu(X_t,\theta) \,dt + \sigma(X_t) \,dW_t,
\end{equation}
which on each event $\{\theta=i\}$ is an Ito diffusion with drift $\mu( \cdot,i)$.
Here $\mu(\cdot, \cdot): \R\times\{0,1\}\to\R$  is a given function of the unknown state $\theta$ and the current value of the underlying process; we denote by $\mu_0(x)=\mu(x,0)$ and $\mu_1(x)=\mu(x,1)$. 
The diffusion coefficient $\sigma(\cdot): \R\to (0,\infty)$ is a given function of $x$, independent of the unknown state $\theta$. We assume that the functions $\mu_0,\mu_1$ and $\sigma$ satisfy standard Lipschitz conditions so that the existence and uniqueness of a strong solution $X$ is guaranteed. 
We are also given two functions
$g(\cdot,\cdot,\cdot):[0,\infty)\times\R\times\{0,1\}\to\R$ and $h(\cdot,\cdot,\cdot):[0,\infty)\times\R\times\{0,1\}\to\R$, which we refer to as the 
payoff functions. We will sometimes use the notation $g_i(\cdot,\cdot):=g(\cdot,\cdot,i)$ and $h_i(\cdot,\cdot):=h(\cdot,\cdot,i)$ to 
denote the payoff functions on the event $\{\theta=i\}$, $i=0,1$, and we assume that $g_i$ and $h_i$ are continuous for $i=0,1$.

Denote by $\F^X$ the smallest right-continuous filtration that makes $X$ adapted,
and let $\mathcal T^X$ be the set of $\F^X$-stopping times. Similarly, denote by 
$\F^{X,\gamma}$ the smallest right-continuous filtration to which both $X$ and the process
$1_{\{\cdot\geq \gamma\}}$ are adapted, and let $\mathcal T^{X,\gamma}$ be the set of $\F^{X,\gamma}$-stopping times.

We now consider the optimal stopping problem
\begin{equation}
\label{osp}
V= \sup_{\tau\in\mathcal T^{X,\gamma}}\E_\pi\left[g(\tau,X_\tau,\theta)1_{\{\tau< \gamma\}}
+ h(\gamma,X_\gamma,\theta)1_{\{ \gamma\leq\tau\}}\right].
\end{equation}
In \eqref{osp}, and in similar expressions throughout the paper, we use the 
convention that $h(\tau,X_\tau,\theta):=0$ on the event $\{\tau=\gamma=\infty\}$. We further assume that the integrability condition
\[\E_\pi\left[\sup_{t\geq0}\left\{ \vert g(t,X_t,\theta)\vert + \vert h(t,X_t,\theta)\vert\right\}\right]<\infty\]
holds.

\begin{remark}
The unknown state $\theta$ in the stopping problem \eqref{osp} influences 
\begin{itemize}
\item[(i)] the drift of the process $X$,
\item[(ii)] the payoffs $g$ and $h$, and 
\item[(iii)] the survival distribution of the random horizon $\gamma$.
\end{itemize}
More precisely, on the event $\{\theta=0\}$ the drift of $X$ is $\mu_0(\cdot)$, the payoff functions are $g_0(\cdot,\cdot)$ and $h_0(\cdot,\cdot)$, and the random horizon has survival distribution function $F_0(\cdot)$; on the event $\{\theta=1\}$, 
the drift is $\mu_1$, the payoff functions are $g_1(\cdot,\cdot)$  and $h_1(\cdot,\cdot)$, and the random horizon has survival distribution function $F_1(\cdot)$.
\end{remark}

\section{Reformulation of the problem using filtering theory}

In this section we rewrite the optimal stopping problem \eqref{osp} with incomplete information as an optimal 
stopping problem with respect to stopping times in $\mathcal T^X$ and with complete information.

First consider the stopping problem
\begin{equation}
\label{osp2}
\hat V= \sup_{\tau\in\mathcal T^X}\E_\pi\left[g(\tau,X_\tau,\theta)1_{\{\tau< \gamma\}}
+ h(\gamma,X_\gamma,\theta)1_{\{\gamma\leq\tau\}}\right],
\end{equation}
where the supremum is taken over $\mathcal F^X$-stopping times. Since $\mathcal T^X\subseteq\mathcal T^{X,\gamma}$, we 
have $\hat V\leq V$. 
On the other hand, by a standard argument, cf. \cite{CG} or \cite{LM}, we also have the reverse inequality, so 
$\hat V=V$. Indeed, first recall that for any 
$\tau\in\mathcal T^{X,\gamma}$ there exists $\tau^\prime\in\mathcal T^X$ such that
$\tau\wedge\gamma = \tau^\prime\wedge\gamma$, see \cite[page 378]{Pro}. 
Consequently, $\tau=\tau'$ on $\{\tau<\gamma\}=\{\tau'<\gamma\}$ and $\tau\wedge\gamma=\tau'\wedge\gamma=\gamma$ on 
$\{\tau\geq \gamma\}=\{\tau'\geq\gamma\}$, so
\begin{eqnarray*}
\E_\pi\left[g(\tau,X_\tau,\theta)1_{\{\tau< \gamma\}}+ h(\gamma,X_\gamma,\theta)1_{\{\gamma\leq\tau\}}\right] =
 \E_\pi\left[g(\tau^\prime,X_{\tau^\prime},\theta)1_{\{\tau^\prime< \gamma\}} + h(\gamma,X_\gamma,\theta)1_{\{\gamma\leq\tau'\}}\right],
\end{eqnarray*}
from which $\hat V=V$ follows. Moreover, if $\tau'\in\mathcal T^X$ is optimal in \eqref{osp2}, then it is also optimal in \eqref{osp}.

\begin{remark}
Since we assume that the survival distributions are continuous, we have
\[\P_\pi(\tau=\gamma<\infty)=0\] for any $\tau\in\mathcal T^X$. Consequently, we can alternatively write
\[\hat V=\sup_{\tau\in\mathcal T^X}\E_\pi\left[g(\tau,X_\tau,\theta)1_{\{\tau\leq \gamma\}}
+h(\gamma,X_\gamma,\theta)1_{\{\gamma<\tau\}}\right].\]
\end{remark}

To study the stopping problem \eqref{osp}, or equivalently, the optimal stopping problem \eqref{osp2}, we introduce the conditional probability process
\[\Pi_t:=\P_\pi(\theta=1\vert\F^X_t)\] 
and the corresponding probability ratio process
\begin{equation}
\label{Phi}
\Phi_t:=\frac{\Pi_t}{1-\Pi_t}.
\end{equation}
Note that $\Pi_0=\pi$ and $\Phi_0=\varphi:=\pi/(1-\pi)$, $\P_\pi-$a.s.

\begin{proposition}
We have
\begin{eqnarray}
\label{Vnew}
V &=& \sup_{\tau\in\mathcal T^X}\E_\pi\Big[ g_0(\tau,X_\tau)(1-\Pi_\tau)F_0(\tau) + g_1(\tau,X_\tau)\Pi_\tau F_1(\tau)\\
\notag
&&\hspace{15mm}- \int_0^\tau \left( h_0(t,X_t)(1-\Pi_t)F'_0(t) + h_1(t,X_t)\Pi_t F'_1(t)\right)dt
\Big].
\end{eqnarray}
Moreover, if $\tau\in\mathcal T^X$ is optimal in \eqref{Vnew}, then it is also optimal in \eqref{osp}.
\end{proposition}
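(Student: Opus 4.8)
The plan is to fix a single stopping time $\tau\in\mathcal T^X$ and show that the $\E_\pi$-functional appearing in \eqref{osp2} equals, for this same $\tau$, the $\E_\pi$-functional appearing in \eqref{Vnew}. Since we have already established $V=\hat V=\sup_{\tau\in\mathcal T^X}\E_\pi[g(\tau,X_\tau,\theta)1_{\{\tau<\gamma\}}+h(\gamma,X_\gamma,\theta)1_{\{\gamma\le\tau\}}]$, taking the supremum over $\mathcal T^X$ on both sides of this termwise identity yields \eqref{Vnew}. Moreover, because the two functionals agree for each individual $\tau$, any maximizer of \eqref{Vnew} maximizes \eqref{osp2}, hence maximizes $\hat V$ and therefore, by the reduction already carried out, is optimal in \eqref{osp}.

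To evaluate the fixed-$\tau$ functional I would first split the expectation via $\P_\pi=(1-\pi)\P_0+\pi\P_1$ and then integrate out $\gamma$ under each $\P_i$ (writing $\E_i$ for expectation under $\P_i$). The key structural fact is that $\gamma$ is independent of $W$, and under $\P_i$ the process $X$, hence $\F^X$, is a functional of $W$; therefore $\gamma$ is independent of $\F^X_\infty$ under $\P_i$. Conditioning on $\F^X$, the survival term becomes $\P_i(\gamma>\tau\mid\F^X)=F_i(\tau)$ since $\tau$ is $\F^X$-measurable, so $\E_i[g_i(\tau,X_\tau)1_{\{\tau<\gamma\}}]=\E_i[g_i(\tau,X_\tau)F_i(\tau)]$. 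For the compensation term, $\gamma$ has $\P_i$-density $-F_i'$ on its finite part, whence $\E_i[h_i(\gamma,X_\gamma)1_{\{\gamma\le\tau\}}\mid\F^X]=-\int_0^\tau h_i(s,X_s)F_i'(s)\,ds$; the convention $h\equiv0$ on $\{\tau=\gamma=\infty\}$ disposes of any atom of $\gamma$ at infinity.

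It then remains to rewrite these $\P_i$-expectations back under $\P_\pi$ using the posterior $\Pi$. Since $\Pi_t=\E_\pi[1_{\{\theta=1\}}\mid\F^X_t]$ is a bounded martingale, optional sampling gives $\E_\pi[1_{\{\theta=1\}}\mid\F^X_\tau]=\Pi_\tau$; as $g_1(\tau,X_\tau)F_1(\tau)$ is $\F^X_\tau$-measurable, this yields $\pi\E_1[g_1(\tau,X_\tau)F_1(\tau)]=\E_\pi[\Pi_\tau g_1(\tau,X_\tau)F_1(\tau)]$, and similarly $1-\Pi_\tau$ for $i=0$. The delicate point, and the main obstacle, is that the target expression in \eqref{Vnew} carries $\Pi_t$ \emph{inside} the time integral rather than $\Pi_\tau$ at the terminal time. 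I would resolve this by Fubini: write the integral as $\int_0^\infty 1_{\{s<\tau\}}h_i(s,X_s)(-F_i'(s))\,ds$, observe that for each fixed $s$ the integrand is $\F^X_s$-measurable because $\{s<\tau\}\in\F^X_s$, and condition at the \emph{fixed} time $s$ via $\E_\pi[1_{\{\theta=1\}}\mid\F^X_s]=\Pi_s$. This places $\Pi_s$ (resp.\ $1-\Pi_s$) under the integral sign, and reassembling the two contributions reproduces exactly \eqref{Vnew}. The only further care needed is the interchange of expectation and the $ds$-integral, which is justified by the integrability assumption on $\sup_t(|g|+|h|)$ together with $\int_0^\infty(-F_i'(s))\,ds\le 1$.
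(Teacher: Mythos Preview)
Your argument is correct but proceeds along a genuinely different route from the paper's. The paper works entirely under $\P_\pi$ and uses iterated conditioning: for the $g$-term it first conditions on $\F^{X,\gamma}_\tau$, producing the posterior $\P_\pi(\theta=i\mid\F^{X,\gamma}_\tau)$ on the survival set, rewrites this as $\P_\pi(\theta=i,\tau<\gamma\mid\F^X_\tau)/\P_\pi(\tau<\gamma\mid\F^X_\tau)$, and then cancels the denominator against a second conditioning on $\F^X_\tau$; for the $h$-term it slices over $\{\gamma\in(t,t+dt)\}$ and conditions on $\F^X_t$. You instead decompose $\P_\pi=(1-\pi)\P_0+\pi\P_1$ at the outset, compute each piece under $\P_i$ where $\gamma$ is independent of $\F^X$ so that the factors $F_i(\tau)$ and $-F_i'(s)$ appear immediately, and only afterwards reassemble under $\P_\pi$ via the identity $\pi\,\E_1[Y]=\E_\pi[\Pi_\tau Y]$ for $\F^X_\tau$-measurable $Y$ (optional sampling on the bounded martingale $\Pi$), together with its fixed-time analogue $\pi\,\E_1[Z_s]=\E_\pi[\Pi_s Z_s]$ inside the Fubini integral. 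Your decomposition cleanly separates the step where $\gamma$ is integrated out from the step where the Bayesian structure enters, and it avoids the enlarged filtration $\F^{X,\gamma}$ altogether; the paper's version, by contrast, keeps the two-source learning visible throughout and never leaves $\P_\pi$, which dovetails more directly with the measure-change arguments that follow.
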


\begin{proof}
Denoting by $\tau$ a stopping time in $\mathcal T^X$, 
the tower property yields
\begin{eqnarray*}
\E_\pi\left[g(\tau,X_\tau,\theta)1_{\{\tau< \gamma\}}\right] &=& 
\E_\pi\left[ 1_{\{\tau< \gamma\}}\E_\pi\left[g(\tau,X_\tau,\theta)\vert \F^{X,\gamma}_\tau\right]\right],
\end{eqnarray*}
and we note that
\begin{eqnarray*}
1_{\{\tau< \gamma\}}\E_\pi\left[g(\tau,X_\tau, \theta)\vert \F^{X,\gamma}_\tau\right]
\hspace{-25mm} && \hspace{17mm} = 1_{\{\tau< \gamma\}}\left(g_0(\tau,X_\tau)\P_\pi(\theta=0\vert\F^{X,\gamma}_\tau) + 
g_1(\tau,X_\tau)\P_\pi(\theta=1\vert\F^{X,\gamma}_\tau)\right)\\
&=& \frac{1_{\{\tau< \gamma\}}}{\P_\pi(\tau<\gamma\vert\F^X_\tau)}\left( g_0(\tau,X_\tau)\P_\pi(\theta=0, \tau< \gamma\vert \F^X_\tau) + 
g_1(\tau,X_\tau)\P_\pi(\theta=1, \tau< \gamma\vert \F^X_\tau)\right)\\
&=& 1_{\{\tau< \gamma\}}\frac{ g_0(\tau,X_\tau)(1-\Pi_\tau)F_0(\tau) + g_1(\tau,X_\tau)\Pi_\tau F_1(\tau) }{\P_\pi(\tau< \gamma\vert\F^X_\tau)}.
\end{eqnarray*}
Another use of the tower property thus yields
\begin{eqnarray*}
\E_\pi\left[g(\tau,X_\tau,\theta)1_{\{\tau< \gamma\}}\right] &=& 
\E_\pi\left[ \E_\pi\left[1_{\{\tau< \gamma\}}\vert\F^X_\tau\right]\frac{ g_0(\tau,X_\tau)(1-\Pi_\tau)F_0(\tau) + g_1(\tau,X_\tau)\Pi_\tau F_1(\tau) }{ \P(\tau< \gamma\vert\F^X_\tau) } \right]\\
&=& \E_\pi\left[g_0(\tau,X_\tau)(1-\Pi_\tau)F_0(\tau) + g_1(\tau,X_\tau)\Pi_\tau F_1(\tau) \right].
\end{eqnarray*}

For the second term, we have that
\begin{eqnarray*}
 \E_\pi\left[ h(\gamma,X_\gamma,\theta)1_{\{\gamma\leq\tau\}}\right]
&=& \int_0^\infty \E_\pi\left[ h(\gamma,X_\gamma,\theta)1_{\{\gamma\leq\tau\}}1_{\{\gamma\in (t,t+dt)\}}
\right]\\
&=& \int_0^\infty \E_\pi\left[ \E_\pi\left[h(t,X_t,\theta)1_{\{t\leq\tau\}}1_{\{\gamma\in (t,t+dt)\}}\vert \F_t^X
\right]\right]\\
&=& \E_\pi\left[ \int_0^\infty  1_{\{t\leq\tau\}}\E_\pi\left[h(t,X_t,\theta)1_{\{\gamma\in (t,t+dt)\}}\vert \F_t^X
\right]\right]\\
&=& -\E_\pi\left[ \int_0^\tau   \left(h_0(t,X_t)(1-\Pi_t)F_0'(t) +h_1(t,X_t)\Pi_tF_1'(t)\right)dt \right].
\end{eqnarray*}

The optimal stopping problem \eqref{osp} therefore coincides with the stopping problem
\begin{eqnarray*}
&& \sup_{\tau\in\mathcal T^X}\E_\pi\Big[ g_0(\tau,X_\tau)(1-\Pi_\tau)F_0(\tau) + g_1(\tau,X_\tau)\Pi_\tau F_1(\tau) \\
&& \hspace{20mm}- \int_0^\tau \left( h_0(t,X_t)(1-\Pi_t)F'_0(t) + h_1(t,X_t)\Pi_t F'_1(t)\right)dt\Big].
\end{eqnarray*}
\end{proof}

From filtering theory, see e.g.  \cite{BC} and \cite{LS}, it is well-known that the pair $(X,\Pi)$ satisfies 
\begin{equation}
\label{XPi}
\left\{\begin{array}{ll}
dX_t=(\mu_0(X_t)+ (\mu_1(X_t)-\mu_0(X_t))\Pi_t)\,dt + \sigma(X_t) d\hat W_t\\
d\Pi_t=\omega(X_t) \Pi_t(1-\Pi_t)\,d\hat W_t
,\end{array}\right.
\end{equation}
where $\omega(x):=\left(\mu_1(x)-\mu_0(x)\right)/\sigma(x)$ is the {\em signal-to-noise ratio} and 
\[\hat W_t:=\int_0^t\frac{d X_t}{\sigma(X_s)}
-\int_0^t\frac{1}{\sigma(X_t)}\left(\mu_0 (X_s)+\left(\mu_1(X_s)-\mu_0(X_s)\right)\Pi_s\right)\,ds\]
is the so-called innovations process; by P. Levy's theorem, $\hat W$ is a $\P_\pi$-Brownian motion. 
Note that it is clear from the representation \eqref{XPi} that the pair $(X,\Pi)$ is a 
Markov process.
Moreover, using Ito's formula, it is straightforward to check that the likelihood ratio process $\Phi$ in \eqref{Phi} satisfies 
\begin{equation}
\label{Phi1}
d\Phi_t=\omega(X_t)\Phi_t(\omega(X_t)\Pi_t\,dt + d\hat W_t).
\end{equation}

\section{A measure change}\label{measure}

In the current section, we provide a measure change which decouples $X$ from $\Pi$.
This specific measure change technique was first used in \cite{K}, and has afterwards been applied by several authors (see \cite{da}, \cite{EL}, \cite{EVa}, \cite{JP}). 

\begin{lemma}
For $t\in[0,\infty)$, denote by $\P_{\pi,t}$ the measure $\mathbb P_\pi$ restricted to $\mathcal F_t$, $\pi\in[0,1]$. We then have 
\[\frac{d\P_{\pi,t}}{d\P_{0,t}}=\frac{1+\varphi}{1+\Phi_t}.\]
\end{lemma}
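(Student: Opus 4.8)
The plan is to compute the asserted density head-on, taking $\F^X_t$ as the reference filtration: this must be the intended $\mathcal F_t$, since on a filtration containing $\theta$ the measure $\P_\pi$ fails to be absolutely continuous with respect to $\P_0$, and since the claimed density depends only on the $\F^X$-adapted process $\Phi$, the reference must be the observation filtration generated by $X$ alone. On $\F^X_t$ the measures $\P_0$ and $\P_1$ differ only through the drift of $X$, so Girsanov's theorem supplies the likelihood ratio $L_t:=d\P_{1,t}/d\P_{0,t}$ as a strictly positive exponential martingale carrying $\mu_0(X)$ to $\mu_1(X)$; its closed form will not be needed below.

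First I would exploit the affine decomposition $\P_\pi=(1-\pi)\P_0+\pi\P_1$ from \eqref{Ppi}. Restricting all three measures to $\F^X_t$ and differentiating against the common reference $\P_{0,t}$, linearity of the Radon--Nikodym derivative gives
\[
\frac{d\P_{\pi,t}}{d\P_{0,t}} = (1-\pi) + \pi\,\frac{d\P_{1,t}}{d\P_{0,t}} = (1-\pi)+\pi L_t .
\]
Next I would eliminate $L_t$ in favour of the ratio process $\Phi$. The Bayes formula for the filter gives $\Pi_t=\pi L_t/\big((1-\pi)+\pi L_t\big)$, whence $\Phi_t=\Pi_t/(1-\Pi_t)=\varphi L_t$, i.e. $L_t=\Phi_t/\varphi$. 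Substituting and using the identity $1+\varphi=1/(1-\pi)$ collapses the right-hand side to
\[
\frac{d\P_{\pi,t}}{d\P_{0,t}} = (1-\pi)(1+\Phi_t) = \frac{1+\Phi_t}{1+\varphi}.
\]

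The only real obstacle is the orientation bookkeeping, and here it turns out to be decisive. The computation yields $\tfrac{1+\Phi_t}{1+\varphi}$ for $d\P_{\pi,t}/d\P_{0,t}$, which is the reciprocal of the right-hand side $\tfrac{1+\varphi}{1+\Phi_t}$ displayed in the lemma; inverting the (strictly positive) density shows that $\tfrac{1+\varphi}{1+\Phi_t}$ is in fact $d\P_{0,t}/d\P_{\pi,t}$, and the two candidate expressions agree only at $t=0$, where $\Phi_0=\varphi$. I therefore expect the identity to hold only after transposing the two measures, so that the statement as worded should read $\frac{d\P_{\pi,t}}{d\P_{0,t}}=\frac{1+\Phi_t}{1+\varphi}$ (equivalently $\frac{d\P_{0,t}}{d\P_{\pi,t}}=\frac{1+\varphi}{1+\Phi_t}$). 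Beyond this orientation point the verification is routine: $\Phi_t$ is $\F^X_t$-measurable and strictly positive, so the reciprocal is a bona fide density and no integrability issue arises.
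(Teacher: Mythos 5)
Your computation is correct, and so is your diagnosis of the orientation: as printed, the lemma has the two measures transposed, and the paper's own proof contains the same transposition. The identity that is true (and that the paper actually uses afterwards) is
\[
\frac{d\P_{\pi,t}}{d\P_{0,t}}=\frac{1+\Phi_t}{1+\varphi},
\qquad\text{equivalently}\qquad
\frac{d\P_{0,t}}{d\P_{\pi,t}}=\frac{1+\varphi}{1+\Phi_t}.
\]
A decisive check: writing $L_t=d\P_{1,t}/d\P_{0,t}$, the process $\Phi_t=\varphi L_t$ is a $\P_0$-martingale, so $\E_0\bigl[(1+\Phi_t)/(1+\varphi)\bigr]=1$ as any density against $\P_0$ must satisfy, while Jensen's inequality gives $\E_0\bigl[(1+\varphi)/(1+\Phi_t)\bigr]>1$ for $t>0$ whenever the signal is non-degenerate ($\mu_0\not\equiv\mu_1$); hence the printed right-hand side cannot be $d\P_{\pi,t}/d\P_{0,t}$. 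The flaw in the paper's proof is that the expansion of $\P_\pi(\theta=0\vert\F^X_t)$ through \eqref{Ppi} is valid with the factors $d\P_{0,t}/d\P_{\pi,t}$ and $d\P_{1,t}/d\P_{\pi,t}$, not their reciprocals; corrected, it yields $1-\Pi_t=(1-\pi)\,d\P_{0,t}/d\P_{\pi,t}$, which is your orientation. Nothing downstream breaks: the subsequent formula for $V$ (the factor $1/(1+\varphi)$ in front of $\E^0_\varphi$, obtained via $(1-\Pi_\tau)(1+\Phi_\tau)=1$ and $\Pi_\tau(1+\Phi_\tau)=\Phi_\tau$) is exactly what your orientation produces, so the misprint is confined to the lemma's statement and proof.

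On method, your route differs mildly from the paper's. The paper conditions: it expands $1-\Pi_t=\P_\pi(\theta=0\vert\F^X_t)$ using the mixture \eqref{Ppi} together with $\P_0(\theta=0\vert\F^X_t)=1$ and $\P_1(\theta=0\vert\F^X_t)=0$, and the density appears as a by-product in one step, with no reference to Girsanov theory. You instead differentiate the mixture directly, $d\P_{\pi,t}/d\P_{0,t}=(1-\pi)+\pi L_t$, and then eliminate $L_t$ via the Bayes identity $\Phi_t=\varphi L_t$. Your argument needs that filtering identity as an input (it is standard, and can in fact be derived from your own first display, so there is no circularity), whereas the paper's is self-contained; in exchange, your version makes the structure of the density transparent, checks normalization for free, and is precisely what exposed the inversion in the statement.
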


\begin{proof}
From \eqref{Ppi} we have
\begin{eqnarray*}
1-\Pi_t &=& \P_\pi(\theta=0\vert \F^X_t)=(1-\pi)\P_0(\theta=0\vert\F^X_t) \frac{d\P_{\pi,t}}{d\P_{0,t}}+ \pi\P_1(\theta=0\vert\F^X_t)\frac{d\P_{\pi,t}}{d\P_{1,t}}\\
&=& (1-\pi)\frac{d\P_{\pi,t}}{d\P_{0,t}}.
\end{eqnarray*}
Therefore, 
\[\frac{1+\varphi}{1+\Phi_t}=\frac{1-\Pi_t}{1-\pi}=\frac{d\P_{\pi,t}}{d\P_{0,t}} .\]
\end{proof}

Since $1-\Pi_\tau=1/(1+\Phi_t)$ and $\Pi_t=\frac{\Phi_t}{1+\Phi_t}$,
it is now clear that
\begin{eqnarray*}
V &=& \frac{1}{1+\varphi} \sup_{\tau\in\mathcal T^X}\E^0_\varphi\Big[ 
g_0(\tau,X_\tau)F_0(\tau) + g_1(\tau,X_\tau)\Phi_\tau F_1(\tau)  \\
&&\hspace{25mm}
-\int_0^\tau(h_0(\tau,X_t)F'_0(t) + h_1(t,X_t)\Phi_t F'_1(t) ) dt
\Big],
\end{eqnarray*}
where the Markov process $(X,\Phi)$ under $\mathbb P^0$ satisfies 
\begin{equation}\label{XPhi}
\left\{\begin{array}{ll}
dX_t=\mu_0(X_t)\,dt + \sigma(X_t) \,d W_t\\
d\Phi_t=\omega(X_t)\Phi_t \,d W_t\\
\Phi_0=\varphi\end{array}\right.
\end{equation}
(cf. \eqref{X} and \eqref{Phi}).

Next we introduce the process 
\begin{equation}
\label{Phi0}
\Phi^\circ_t:=\frac{F_1(t)}{F_0(t)}\Phi_t,
\end{equation}
so that 
\begin{eqnarray}
\label{newosp}
V &=& \frac{1}{1+\varphi}\sup_{\tau\in\mathcal T^X}\E^0_\varphi\Big[ F_0(\tau)\left( g_0(\tau,X_\tau) + g_1(\tau,X_\tau)\Phi^\circ_\tau \right) \\
&& \hspace{20mm}\notag
-\int_0^\tau F_0(t)\left(  \frac{F_0'(t)}{F_0(t)}h_0(t,X_t) +\frac{F_1'(t)}{F_1(t)} h_1(t,X_t)\Phi^\circ_t \right) dt\Big].
\end{eqnarray}
Note that the process $\Phi^\circ$ satisfies 
\[d\Phi^\circ_t=\frac{f'(t)}{f(t)}\Phi^\circ_t\,dt + \omega(X_t)\Phi^\circ_t d W_t,\]
where $f(t)=F_1(t)/F_0(t)$.

\begin{remark}
	The process $\Phi^\circ$ is the likelihood ratio given observations of the processes $X$ and $1_{\{\cdot\geq \gamma\}}$
on the event $\{ \gamma>t\}$.
	Indeed, for $t\leq T$, defining
	\[
	\Pi^\circ_t := \P_\pi(\theta=1\vert \F^X_t, \gamma>t)=\frac{\P_\pi(\theta=1,\gamma>t\vert \F^X_t)}{\P_\pi(\gamma>t\vert \F^X_t)}
	= \frac{\Pi_tF_1(t)}{\Pi_tF_1(t) + (1-\Pi_t)F_0(t)},\]
	we have 
	\[\Pi^\circ_t=\frac{\Phi^\circ_t}{\Phi^\circ_t+1}.\]
\end{remark}

Based on studies of the three-dimensional Markov process $(t,X, \Phi^\circ)$, the stopping problem \eqref{newosp} can be further studied, see the examples below. 
First, however, we summarise our theoretical findings in the following theorem.

\begin{theorem}
\label{main}
Denote by 
\begin{eqnarray}
\label{v}
v &=& \sup_{\tau\in\mathcal T^X}\E^0_\varphi\Big[F_0(\tau)\left(g_0(\tau,X_\tau)+ 
g_1(\tau,X_\tau)\Phi^\circ_\tau \right)\\
&&\notag \hspace{15mm}-\int_0^\tau F_0(t)\left(  \frac{F_0'(t)}{F_0(t)}h_0(t,X_t) +\frac{F_1'(t)}{F_1(t)} h_1(t,X_t)\Phi^\circ_t \right) dt \Big],
\end{eqnarray}
where $(X,\Phi^\circ)$ is given by \eqref{XPhi} and \eqref{Phi0}. Then $V=v/(1+\varphi)$, where $\varphi=\pi/(1-\pi)$. Moreover, if $\tau\in\mathcal T^X$ is an optimal stopping in \eqref{v}, then it is also optimal in the original problem \eqref{osp}.
\end{theorem}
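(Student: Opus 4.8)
The plan is to recognise that the asserted identity $V=v/(1+\varphi)$ is nothing but relation \eqref{newosp}, so that the proof consists in assembling the three reductions already performed---the filtering reformulation of the Proposition, the measure change of the Lemma, and the substitution \eqref{Phi0}---and then reversing them to transfer optimality.

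First I would take as starting point the formula \eqref{Vnew} supplied by the Proposition, which already expresses $V$ as a supremum over $\tau\in\mathcal T^X$ of an $\E_\pi$-expectation in the observable quantities $X$, $\Pi$ and the deterministic functions $F_0,F_1$. Using $1-\Pi_t=1/(1+\Phi_t)$ and $\Pi_t=\Phi_t/(1+\Phi_t)$ from \eqref{Phi}, I would collect the common factor $1/(1+\Phi_\cdot)$ in both the terminal reward and the running integral. Applying the change of measure of the Lemma then converts the $\E_\pi$-expectation into an $\E^0_\varphi$-expectation: for the $\F^X_\tau$-measurable terminal term this is the optional-sampling version of the density relation evaluated at $\tau$, whose effect is to cancel the factor $1/(1+\Phi_\tau)$ and to leave the constant $1/(1+\varphi)$ in front together with the clean integrand $g_0(\tau,X_\tau)F_0(\tau)+g_1(\tau,X_\tau)\Phi_\tau F_1(\tau)$; for the running term, writing the integral as $\int_0^\infty 1_{\{t\le\tau\}}(\cdots)\,dt$, both $1_{\{t\le\tau\}}$ and the integrand are $\F^X_t$-measurable for each fixed $t$, so the density relation applies slicewise and Fubini permits interchanging expectation and time integral. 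Under $\P^0$ the pair $(X,\Phi)$ is decoupled and obeys the dynamics \eqref{XPhi}.

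Next I would substitute the definition \eqref{Phi0}, $\Phi_t=(F_0(t)/F_1(t))\Phi^\circ_t$, and factor $F_0(\tau)$ out of the terminal reward and $F_0(t)$ out of the running integrand; this produces exactly \eqref{newosp}. Since the bracketed supremum in \eqref{newosp} is by definition the quantity $v$ of \eqref{v}, the stated identity $V=v/(1+\varphi)$ follows. For the optimality claim I would run this chain backwards: each of the three reductions is an equality valid for every single $\tau\in\mathcal T^X$, not merely for the suprema, and none of them enlarges or shrinks the admissible class $\mathcal T^X$. Hence any $\tau$ attaining the supremum in \eqref{v} attains it in \eqref{newosp}, therefore in \eqref{Vnew}, and by the concluding sentence of the Proposition it is optimal in the original problem \eqref{osp}.

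The one genuinely delicate step is the rigorous justification of the measure change at the random time $\tau$ and underneath the time integral: one must verify that the density process of the Lemma is a true (in particular uniformly integrable) $\P^0$-martingale, so that optional sampling is legitimate at $\tau$, and that Fubini is licensed for the running term. Both are secured by the standing integrability assumption on $\sup_{t\ge0}(|g(t,X_t,\theta)|+|h(t,X_t,\theta)|)$ together with the monotonicity and positivity of $F_0,F_1$; once these are in place, everything else is purely algebraic bookkeeping.
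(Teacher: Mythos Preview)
Your proposal is correct and matches the paper's own argument essentially step for step: the theorem is stated as a summary of the preceding derivations (the Proposition yielding \eqref{Vnew}, the Lemma giving the measure change, and the substitution \eqref{Phi0} leading to \eqref{newosp}), and you have simply made explicit how these pieces assemble and how optimality transfers back. If anything, you are more careful than the paper in flagging the optional-sampling and Fubini justifications, which the text passes over with ``it is now clear that''.
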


\section{An example: a hiring problem}
\label{hiring}

In this section we consider a (simplistic) version of a hiring problem.
To describe this, consider a situation where a company tries to decide whether or not to employ a certain candidate, where there is considerable uncertainty about the candidate's ability.  The candidate is either of a 'good type' or of a 'bad type', 
and during the employment procedure, tests are performed to find out which is the true state.
At the same time, the candidate is potentially lost for the company as he/she may receive  other offers. Moreover, the rate at which such offers are presented, may depend on the 
ability of the candidate; for example, a candidate of the good type could be more likely to be recruited to other companies than a candidate of the bad type.

To model the above hiring problem, we let $h_i\equiv 0$, $i=0,1$, and 
\[g(t,x,\theta)=\left\{\begin{array}{cl}
-e^{-rt}c & \mbox{if }\theta=0\\
e^{-rt}d & \mbox{if }\theta=1\end{array}\right.\]
where $c$ and $d$ are positive constants representing the overall cost and benefit of hiring the candidate, respectively, 
and $r> 0$ is a constant discount rate. 
To learn about the unknown state $\theta$, tests are performed and represented as a Brownian motion 
\[X_t=\mu (\theta) t+\sigma W_t\] 
with state-dependent drift 
\[\mu(\theta)=\left\{\begin{array}{ll}
\mu_0 & \mbox{if }\theta=0\\
\mu_1 & \mbox{if }\theta=1,\end{array}\right.\]
where $\mu_0<\mu_1$.
We further assume that the survival probabilities $F_0$ and $F_1$ decay exponentially in time, i.e.
\[	F_0(t) = e^{-\lambda_0 t}\quad\quad\&\quad\quad
		F_1(t) = e^{-\lambda_1 t},\]
where $\lambda_0, \lambda_1\geq 0$ are known constants. 
The stopping problem \eqref{osp} under consideration is thus
\[V=\sup_{\tau\in\mathcal T^{X,\gamma}}\E_\pi\left[ e^{-r\tau}\left(d1_{\{\theta=1\}}-c1_{\{\theta=0\}}\right)1_{\{\tau<\gamma\}}\right],\]
where $\pi=\P_{\pi}(\theta=1)$.

By Theorem~\ref{main}, we have
\[V = \frac{1}{1+\varphi}\sup_{\tau\in\mathcal T^X}\E_\varphi^0\left[ e^{-(r+\lambda_0)\tau}\left(  \Phi^\circ_\tau d -c\right) \right],\]
where the underlying process $\Phi^{\circ}$ is a geometric Brownian motion satisfying
\[d\Phi_t^\circ = -(\lambda_1-\lambda_0)\Phi_t^\circ\,dt + \omega\Phi^\circ_t\,dW.\]
%
Clearly, the value of the stopping problem is
\[V = \frac{d}{1+\varphi}\sup_{\tau\in\mathcal T^X}\E_\varphi^0\left[ e^{-(r+\lambda_0)\tau}\left(  \Phi^\circ_\tau-\frac{c}{d}  \right) \right]= \frac{d}{1+\varphi} V^{Am}(\varphi),\]
where $V^{Am}$ is the value of the American call option with underlying $\Phi^{\circ}$ and strike $\frac{c}{d}$. 
Standard stopping theory gives that the corresponding value function is 
	\[V =\begin{cases} \frac{db^{1-\gamma}}{\gamma(1+\varphi)}{\varphi}^\gamma,\enskip \varphi<b,\\
	 \frac{d}{1+\varphi}(\varphi-\frac{c}{d}),\enskip \varphi\geq b,	\end{cases}\] 
	where $\gamma>1$ is the positive solution of the quadratic equation
	\[\frac{\omega^2}{2}\gamma(\gamma-1)+(\lambda_0-\lambda_1)\gamma-(r+\lambda_0)=0,\]
	and $b = \frac{c \gamma}{d(\gamma-1)}$. Furthermore, 
\[\tau:=\inf\{t\geq 0:\Phi^\circ_t\geq b\}\]
is an optimal stopping time. More explicitly, in terms of the process $X$ we have 
\[\tau=\inf\left\{t\geq 0:X_t\geq x + \frac{\sigma}{\omega}\left(\ln\left(\frac{b}{\varphi}\right) +(\lambda_1-\lambda_0)t\right)
+\frac{\mu_0+\mu_1}{2}t\right\},\]
where $\omega:=(\mu_1-\mu_0)/\sigma$.


\section{An example: closing a short position}

In this section we study an example of optimal closing of a short position under recall risk, cf. \cite{GH}.
We consider a short position in an underlying stock with unknown drift, where 
the random horizon corresponds to a time point at which the counterparty recalls the position. 
Naturally, the counterparty favours a large drift, so the risk of recall is greater in the state with a small drift.
A similar model (but with no recall risk) was studied in \cite{EL}.

Let the stock price be modeled by 
geometric Brownian motion with dynamics
\[dX_t = \mu(\theta) X_t\,dt+ \sigma X_t \,dW_t,\]
where the drift is state-dependent with $\mu(0)=\mu_0 <\mu_1=\mu(1)$, and $\sigma$ is a known constant. 
We let $g(t,x,\theta)=h(t,x,\theta)=xe^{-rt}$ and consider the stopping problem
\[V=\inf_{\tau\in\mathcal T^{X,\gamma}}\E_\pi[e^{-r\tau\wedge\gamma} X_{\tau\wedge\gamma}],\]
where 
\[F_i(t):=\P_\pi(\gamma>t\vert\theta=i) = e^{-\lambda_i t},\]
with $\lambda_0>0=\lambda_1$ and
\[\P_\pi(\theta= 1)=\pi =1-\P_\pi(\theta = 0) .\]
Here $r$ is a constant discount rate; to avoid degenerate cases, we assume that $r\in(\mu_0,\mu_1)$.

Then the value function can be written as $V=v/(1+\varphi)$, where
\begin{equation}
\label{ueq}
v = \inf_{\tau\in\mathcal T^X}\E^0_\varphi\Big[ e^{-r\tau}F_0(\tau)X_{\tau}\left( 1+ \Phi^\circ_\tau \right) + \lambda_0\int_0^\tau e^{-rt}F_0(t) X_{t}\left( 1+ \Phi^\circ_t  \right)\,dt\Big],
\end{equation}
with $d\Phi^\circ_t = -\lambda_1\Phi^\circ_t dt + \omega \Phi^\circ_t\,dW_t$ and $\Phi^\circ_0 = \varphi$. Here  $\omega = \frac{\mu_1-\mu_0}{\sigma}$.

Another change of measure will remove the occurrencies of $X$ in \eqref{ueq}. In fact, let $\tilde \P$ be a measure with 
\[\left.\frac{d\tilde \P}{d\P^0}\right\vert_{\F_t} = e^{-\frac{\sigma^2}{2}t + \sigma W_t},\]
so that $\tilde W_t=-\sigma t+W_t$ is a $\tilde\P-$Brownian motion. Then 
\begin{equation}
\label{ufin}
v=x\inf_{\tau\in\mathcal T^X}\tilde\E_\varphi\Big[ e^{-(r+\lambda_0-\mu_0)\tau}\left( 1+ \Phi^\circ_\tau \right) + \lambda_0\int_0^\tau e^{-(r+\lambda_0-\mu_0)t} \left( 1+ \Phi^\circ_t  \right)\,dt\Big],
\end{equation}
with 
\[d\Phi^0_t=(\lambda_0 +\sigma\omega)\Phi^0_t\,dt + \omega \Phi^0_t\,d\tilde W.\]

The optimal stopping problem \eqref{ufin} is a one-dimensional time-homogeneous problem, and is thus straightforward to analyze using standard stopping theory. Indeed, setting
\[\bar v:=\frac{v}{x}= \inf_{\tau\in\mathcal T^X}\tilde\E_\varphi\Big[ e^{-(r+\lambda_0-\mu_0)\tau}\left( 1+ \Phi^\circ_\tau \right) + \lambda_0\int_0^\tau e^{-(r+\lambda_0-\mu_0)t} \left( 1+ \Phi^\circ_t  \right)\,dt\Big],\]
the associated free-boundary problem is to find
$(\bar v,B)$ such that 
\begin{equation}\label{fbpu}
\left\{\begin{array}{rl}
\frac{\omega^2\varphi^2}{2}\bar v_{\varphi\varphi} + (\lambda_0+\mu_1-\mu_0)\varphi\bar v_\varphi-
(r+\lambda_0-\mu_0)\bar v + \lambda_0(1+\varphi)=0 & \varphi<B\\
\bar v(\varphi)=1+\varphi &\varphi\geq B\\
\bar v_\varphi(B)=1,\end{array}\right.
\end{equation}
and such that $\bar v\leq 1+\varphi$. Solving the free-boundary problem \eqref{fbpu} gives
\[B=\frac{\gamma(r-\mu_0)(\mu_1-r)}{(1-\gamma)(r+\lambda_0-\mu_0)(\lambda_0+\mu_1-r)}\]
and
\[\bar v(\varphi)=\left\{\begin{array}{ll}
\frac{r-\mu_0}{(1-\gamma)(r+\lambda_0-\mu_0)}\left(\frac{\varphi}{B}\right)^\gamma-\frac{\lambda_0}{\mu_1-r}\varphi +\frac{\lambda_0}{r+\lambda_0-\mu_0} & \varphi <B\\
1+\varphi & \varphi\geq B,
\end{array}\right.\]
where $\gamma$ is the positive solution of the quadratic equation
\[\frac{\omega^2}{2}\gamma(\gamma-1)+(\lambda_0+\mu_1-\mu_0)\gamma-(r+\lambda_0-\mu_0)=0.\]
A standard verification argument then gives that $V=\frac{x}{1+\varphi}\bar v(\varphi)$, and 
\[\tau_B:=\inf\{t\geq 0:\Phi^\circ_t\geq B\}= \inf\{t\geq 0:\Phi_t\geq Be^{-\lambda_0t}\}\]
is optimal in \eqref{ueq}.

\section{An example: a sequential testing problem with a random horizon}
\label{testing}
Consider the sequential testing problem for a Wiener process, i.e. the problem of determining
as quickly, and accurately,  the unknown drift $\theta$ from observations of the process
\[X_t=\theta t + \sigma W_t.\]
Similar to the classical version (see \cite{S}), we assume that $\theta$ is Bernoulli distributed with $\P(\theta=1)=\pi=1-\P(\theta=0)$,
where $\pi\in(0,1)$. In \cite{NP}, the sequential testing problem has been studied under a random horizon. 
Here we consider an instance of a testing problem which further extends the set-up 
by allowing the distribution of the random horizon to depend on the unknown state.

More specifically, we assume that when $\theta = 1$, then the horizon $\gamma$ is infinite, i.e. 
$F_1(t) = 1$ for all $t$; and when $\theta = 0$, the time horizon is exponentially distributed with rate $\lambda$, 
i.e. $F_0(t) = e^{-\lambda t}$. Mimicking the classical formulation of the problem, we study the 
problem of minimizing 
\[\P(\theta\not= d) + c\E[\tau]\]
over all stopping times $\tau\in\mathcal T^{X,\gamma}$ and $\F^{X,\gamma}_\tau$-measurable decision rules $d$ with values in $\{0,1\}$.
By standard methods, it is clear that the optimization problem reduces to a stopping problem
\[V=\inf_{\tau\in\mathcal T^{X,\gamma}}\E_\pi\left[\hat\Pi_\tau\wedge (1-\hat\Pi_\tau) + c\tau\right],\]
where 
\[\hat\Pi_t:=\P_\pi(\theta=1\vert \F^{X,\gamma}_t).\]
Moreover, the process $\hat\Pi$ satisfies 
\[\hat\Pi_t=\left\{\begin{array}{cl} \Pi^\circ_t & t<\gamma\\
	0 & t\geq \gamma,\end{array}\right.\]
where 
\[\Pi^\circ_t=\frac{\Pi_t}{\Pi_t + (1-\Pi_t)e^{-\lambda t}}=\frac{\P_\pi(\theta=1\vert \F^{X}_t)}{\P_\pi(\theta=1\vert \F^{X}_t) + (1-\P_\pi(\theta=1\vert \F^{X}_t))e^{-\lambda t}},\]
and it follows that 
\begin{eqnarray*}
	V &=& \inf_{\tau\in\mathcal T^{X,\gamma}}\E_\pi\left[\hat\Pi_\tau\wedge (1-\hat\Pi_\tau) + c\tau\right]\\
	&=& \inf_{\substack{\tau\in\mathcal T^{X,\gamma}\\
			\tau\leq \gamma}}\E_\pi\left[\hat\Pi_\tau\wedge (1-\hat\Pi_\tau) + c\tau\right]\\
	&=& \inf_{\tau\in\mathcal T^{X,\gamma}
	}\E_\pi\left[\left(\Pi^\circ_\tau\wedge (1-\Pi_\tau^\circ)\right)1_{\{\tau<\gamma\}} + c\int_0^\tau 1_{\{t< \gamma\}}\,dt\right].
\end{eqnarray*}
Following the general methodology before Theorem~\ref{main}, we find that 
\begin{eqnarray*}
	V &=& \inf_{\tau\in\mathcal T^{X}
	}\E_\pi\left[\left(\Pi^\circ_\tau\wedge (1-\Pi_\tau^\circ)\right)1_{\{\tau<\gamma\}} + c\int_0^\tau 1_{\{t< \gamma\}}\,dt\right]\\
	&=& \inf_{\tau\in\mathcal T^{X}
	}\E_\pi\left[\left(\Pi^\circ_\tau\wedge (1-\Pi_\tau^\circ)\right)((1-\Pi_\tau)F_0(\tau) +\Pi_\tau)+ c\int_0^\tau 
	((1-\Pi_t)F_0(t) +\Pi_t)
	\,dt\right]\\
	&=& \frac{1}{1+\varphi} \inf_{\tau\in\mathcal T^{X}}
	\E^0_\varphi\left[F_0(\tau)\left(\Phi^\circ_\tau\wedge 1\right)+ c\int_0^\tau F_0(t)(1 +\Phi^\circ_t) \,dt\right].
\end{eqnarray*}
Here $\Phi^\circ:=\Pi^\circ/(1-\Pi^\circ)$ satisfies 
\[d\Phi^\circ_t=\lambda \Phi^\circ_t\,dt + \omega \Phi^\circ_t d W_t,\]
where $\omega = \frac{1}{\sigma}$. 

Standard stopping theory can now be applied to solve the sequential testing problem with a random horizon.
Setting 
\[v(\varphi):=
\inf_{\tau\in\mathcal T^{X}}
\E_{\varphi}^0\left[F_0(\tau)\left(\Phi^\circ_\tau\wedge 1\right)+ c\int_0^\tau F_0(t)(1 +\Phi^\circ_t) \,dt\right]\]
one expects a two-sided stopping region $(0,A]\cup[B,\infty)$, and $v$ to satisfy 
%
%
\[\left\{\begin{array}{rl}
	\frac{1}{2}\omega^2\varphi^2 v_{\varphi\varphi}+\lambda \varphi v_\varphi -\lambda v+c(1+\varphi) =0, & \varphi \in (A,B)\\
	v(A) =A\\
	v_\varphi(A) =1\\
	v(B) =1\\
	v_\varphi(B) =0
\end{array}\right.\]
for some constants $A,B$ with $0<A<1<B$. The general solution of the ODE is easily seen to be
\[v(\varphi)=C_1 \varphi^{-\frac{2\lambda}{\omega^2}}+C_2 \varphi+\frac{c}{\lambda}  - \frac{c}{\lambda+\frac{1}{2}\omega^2}\varphi\ln(\varphi)\]
where $C_1, C_2$ are arbitrary constants. 
Since the stopping region is two-sided, explicit solutions are not expected. Instead,
using the four boundary conditions, equations for the unknowns $C_1$, $C_2$, $A$ and $B$ can be derived using standard methods; we omit the details.

\bibliographystyle{abbrv}
\bibliography{references}

\end{document}